\newtheorem*{maintheorem*}{Main Theorem}
\newtheorem{theorem}{Theorem}[section]
\newtheorem{question}[theorem]{Question}
\theoremstyle{definition}
\newtheorem{remark}[theorem]{Remark}
\newtheorem{example}[theorem]{Example}
\numberwithin{equation}{section}
\newcommand{\nn}{\mathbb{N}}
\newcommand{\pp}{\mathbb{P}}
\newcommand{\qq}{\mathbb{Q}}
\newcommand{\rr}{\mathbb{R}}
\newcommand{\uu}{\mathcal{U}}
\newcommand{\zz}{\mathbb{Z}}
\providecommand\ldb{\llbracket}
\providecommand\rdb{\rrbracket}
\keywords{Puiseux monoid, internal sum, atomicity, divisibility, chain of ideals}
\subjclass[2020]{Primary: 13F15, 20M25; Secondary: 13A05, 13G05}
\begin{document}
	\mbox{}
	\title{On the internal sum of Puiseux monoids}
	
	\author{Jonathan Du}
	\address{MIT PRIMES\\MIT\\Cambridge, MA 02139}
	\email{jonathan.cx.du@gmail.com}
	
	
	\author{Bryan Li}
	\address{MIT PRIMES\\MIT\\Cambridge, MA 02139}
	\email{wowo2888@gmail.com}

	\author{Shaohuan Zhang}
	\address{MIT PRIMES\\MIT\\Cambridge, MA 02139}
	\email{szhang26@cranbrook.edu}
	
	\date{\today}
	
	\begin{abstract}
		In this paper, we investigate the internal (finite) sum of submonoids of rank-$1$ torsion-free abelian groups. These submonoids, when not groups, are isomorphic to nontrivial submonoids of the nonnegative cone of $\qq$, known as Puiseux monoids, and have been actively studied during the last few years. Here we study how the atomicity and arithmetic of Puiseux monoids behave under their internal (finite) sum inside the abelian group~$\qq$. We study the factorization properties of such internal sums, giving priority to Cohn's notion of atomicity and the classical bounded and finite factorization properties introduced and studied in 1990 by Anderson, Anderson, and Zafrullah in the setting of integral domains, and then generalized by Halter-Koch to commutative monoids. We pay special attention to how each of the considered properties behaves under the internal sum of a Puiseux monoid with a finitely generated Puiseux monoid. Throughout the paper, we also discuss examples showing that our primary results do not hold for submonoids of torsion-free abelian groups with rank larger than~$1$.
	\end{abstract}

\maketitle

\section{Introduction}
\label{sec:intro}

Let $G$ be an abelian group (additively written). The internal sum of two submonoids $S$ and $T$ of $G$, denoted by $S+T$, is the smallest submonoid of $G$ containing the set $S \cup T$. The following is an explicit way to write the same internal sum:
\[
    S+T = \{s+t : s \in S \text{ and } t \in T\}.
\]
It is clear that the property of being finitely generated is preserved under taking internal sums. Therefore, it is natural to wonder whether inside the group $G$ (used as a universe), the internal sum with a finitely generated submonoid of $G$ preserves a given property. More formally, we have the following question.

\begin{question} \label{quest:main question}
    Given an algebraic property $\mathfrak{p}$ and submonoids $M$ and $N$ of $G$ such that $M$ satisfies the property $\mathfrak{p}$ and $N$ is finitely generated, does the internal sum $M+N$ satisfy $\mathfrak{p}$?
\end{question}

The answer to Question~\ref{quest:main question} heavily depends on the property~$\mathfrak{p}$ and the universe~$G$. The properties we consider here are precisely those studied in the landmark paper~\cite{AAZ90}, where Anderson, Anderson, and Zafrullah introduced the bounded and finite factorization properties in the class of atomic domains and proposed Diagram~\ref{diag:AAZ's chain for monoids} as a methodology for a systematic investigation of the deviation of a given integral domain from being a UFD (Diagram~\ref{diag:AAZ's chain for monoids} is adapted for the class of cancellative and commutative monoids, which is larger than the class of integral domains). We also consider two further properties that are closely connected to the property of being atomic and the property of finite factorization: these are strong atomicity and the length-finite factorization property, which we define later.
\begin{equation} \label{diag:AAZ's chain for monoids}
	\begin{tikzcd}
		\textbf{ UFM } \ \arrow[r, Rightarrow]  \arrow[d, Rightarrow] & \ \textbf{ HFM } \arrow[d, Rightarrow] \\
		\textbf{ FFM } \ \arrow[r, Rightarrow] & \ \textbf{ BFM } \arrow[r, Rightarrow]  & \textbf{ ACCP monoid}  \arrow[r, Rightarrow] & \textbf{ atomic monoid}
	\end{tikzcd}
\end{equation}
\smallskip
The primary purpose of this paper is to answer Question~\ref{quest:main question} for the factorization properties in Diagram~\ref{diag:AAZ's chain for monoids} and considering rank-$1$ torsion-free abelian groups as our universe. It is well known that every rank-$1$ torsion-free abelian group is isomorphic to a subgroup of $\qq$ (see \cite[Section~18]{lF70}), and this allows us to fix the abelian group $\qq$ as our universe. By virtue of \cite[Theorem 3.12]{GGT21}, every submonoid of $\qq$ that is not a group is isomorphic to a submonoid of the nonnegative cone of $\qq$. Submonoids of $\qq_{\ge 0}$ are known as Puiseux monoids, and they have been actively investigated during the last few years, mostly in the setting of atomicity and factorization theory.
\smallskip

It is worth emphasizing that the abelian group $\qq$ is perhaps the simplest universe where the algebraic properties we are interested in are not trivially preserved by taking internal sum with a finitely generated submonoid. This statement is justified by the following two facts. First, the internal sum of two submonoids of $\zz$ is either a group or a numerical monoid, and so such an internal sum automatically satisfies all the properties in Diagram~\ref{diag:AAZ's chain for monoids} (with the exception of UFM, which is satisfied if and only if the internal sum is a cyclic monoid or a subgroup of $\zz$). Second, every submonoid of a finite abelian group is a subgroup, and so the internal sum of any two submonoids inside a finite abelian group is a group, and so it is trivially a UFM. The other potential simple universes that one may consider in Question~\ref{quest:main question} are the free abelian groups $\zz^n$ (for $n \ge 2$), and throughout this paper we discuss examples to illustrate that the answers to Question~\ref{quest:main question} for the properties we are interested in here are negative even for submonoids of $\zz^2$.
\smallskip


Although we will give a positive answer to Question~\ref{quest:main question} for most of the properties in Diagram~\ref{diag:AAZ's chain for monoids} (except for UFMs and HFMs), we should mention that there are important algebraic properties for which the corresponding answers to Question~\ref{quest:main question} are negative even inside the universe $\qq$. Perhaps two of the most important properties are the property of being a UFM (the first property in Diagram~\ref{diag:AAZ's chain for monoids}) and the property of being a Krull monoid (perhaps the property most systematically investigated in the literature of arithmetic and factorization theory); indeed, even though $2\nn_0$ and $3\nn_0$ are UFMs (and thus, Krull monoids), their internal sum, $\nn_0 \setminus \{1\}$, is not even Krull (the Puiseux monoids that are Krull monoids are precisely the cyclic submonoids of $\qq$ \cite[Corollary 6.7]{fG18}). It was also recently noted in \cite[Example~3.4]{GG24} that the internal sum of Puiseux monoids satisfying the ascending chain condition on principal ideals (ACCP) may not satisfy the ACCP.
\smallskip

Let $M$ be a submonoid of an (additive) abelian group. Following Cohn~\cite{pC68}, we say that $M$ is \emph{atomic} if every non-invertible element of $M$ can be written as a sum of atoms (i.e., irreducible elements). Also, $M$ is called \emph{strongly atomic} if $M$ is atomic and any two element of $M$ have a maximal common divisor. In Section~\ref{sec:atomic properties}, we prove that inside any rank-$1$ torsion-free abelian group, the internal sum of an atomic (resp., a strongly atomic) monoid and a finitely generated monoid is an atomic (resp., strongly atomic) monoid. This is the main result of Section~\ref{sec:atomic properties}, and we illustrate that this result cannot be generalized to submonoids of torsion-free abelian groups with higher rank. We also show that, even if we take a rank-$1$ torsion-free abelian group as our universe, the internal sum of atomic submonoids may be far from being atomic; indeed, it may contain no atoms (without being a group).
\smallskip

Now assume that the submonoid $M$ is atomic. Following Anderson, Anderson, and Zafrullah~\cite{AAZ90} and Halter-Koch~\cite{fHK92}, we say that $M$ is a \emph{finite factorization monoid} (FFM) if every non-invertible element of $M$ has a finitely many factorizations (i.e., can be written as a sum of atoms in essentially finitely many ways) while we say that $M$ is a \emph{bounded factorization monoid} (BFM) if for each non-invertible element of $M$ there exists a positive upper bound such that the number of atoms in any factorization of such element (counting repetitions) is below that bound. Following Geroldinger and Zhong~\cite{GZ21}, we say that $M$ is a \emph{length-finite factorization monoid} (LFFM) if for every positive integer $\ell$, any non-invertible element of $M$ has only finitely many factorizations with exactly $\ell$ atoms (counting repetitions). In Section~\ref{sec:factorization properties}, we prove that inside any rank-$1$ torsion free abelian group, the internal sum of an FFM (resp., a BFM, an LFFM) and a finitely generated monoid is again an FFM (resp., a BFM, an LFFM). We also illustrate that our theorem cannot be generalized to abelian groups with higher rank, and we provide examples of two Puiseux monoids that are BFMs whose internal sum is not even atomic, as well as an example of two rank-2 monoids that are FFMs whose internal sum is not atomic.

\bigskip
\section{Background}
\label{sec:background}

As it is customary, we let $\zz$, $\qq$, and $\rr$ denote the set of integers, rational numbers, and real numbers, respectively. In addition, we let $\pp$, $\nn$, and $\nn_0$ denote the set of primes, positive integers, and nonnegative integers, respectively. For $b,c \in \zz$, we denote the discrete closed interval from $b$ to $c$ by $\ldb b,c \rdb$; that is
\[
	\ldb b,c \rdb := \{n \in \zz : b \le n \le c\}.
\]
If $q \in \qq \setminus \{0\}$, then $\mathsf{n}(q)$ and $\mathsf{d}(q)$ are, respectively, the unique $n \in \zz$ and $d \in \nn$ such that $q = \frac{n}d$ and $\gcd(n,d) = 1$. For a nonzero $n \in \zz$ and $p \in \pp$, we let $v_p(n)$ denote the $p$-adic valuation of $n$, $v_p(n) = \max \{m \in \nn_0 : p^m \mid n \}$. Then, for $p \in \pp$, the $p$-adic valuation map $v_p \colon \qq \setminus \{0\} \to \zz$ is the function defined by the assignment $q \mapsto v_p(\mathsf{n}(q)) - v_p(\textsf{d}(q))$ for each $q \in \qq \setminus \{0\}$.
\smallskip

Given that all semigroups we consider here are cancellative and commutative, throughout this paper, a \emph{monoid}\footnote{The most standard definition of a monoid does not assume either cancellation or commutativity.} is a semigroup with an identity element. Also, unless otherwise specified, monoids here will be denoted additively with identity element denoted by $0$, which we will refer to as the zero element throught this paper. We assume that submonoids inherit identity elements and also that monoid homomorphisms respect identity elements. Let $M$ be a commutative monoid with identity element~$0$. We let $M^\bullet$ denote the set of nonzero elements of~$M$. The abelian group consisting of all invertible elements of $M$ is denoted by $\uu(M)$, and $M$ is called \emph{reduced} provided that the abelian group $\uu(M)$ is trivial. One can readily check that the quotient $M/\uu(M)$ is a reduced monoid, and we denote this quotient by $M_{\text{red}}$.
\smallskip

For $b,c \in M$, we say that $c$ \emph{divides} $b$ in $M$ if we can write $b = c+d$ for some $d \in M$, in which case we write $c \mid_M b$. Two elements $b,c \in M$ are called \emph{associates} if $b \mid_M c$ and $c \mid_M b$. A \emph{maximal common divisor} of a nonempty subset $S$ of $M$ is a common divisor $d \in M$ such that the only common divisors of the set $\big\{m_s : m_s+d \in S \big\}$ are the invertible elements of~$M$. We say that $M$ is \emph{$2$-MCD} if any two elements of $M$ have a maximal common divisor.
\smallskip

A non-invertible element $a \in M$ is called an \emph{atom} provided that, for all $b,c \in M$, the equality $a = b+c$ implies that either $b \in \uu(M)$ or $c \in \uu(M)$. The set consisting of all atoms of $M$ is denoted by $\mathcal{A}(M)$. If $\mathcal{A}(M)$ is empty, then the monoid $M$ is called \emph{antimatter}. The additive monoid $\qq_{\ge 0}$ is a simple example of an antimatter monoid; indeed, every rational $q \in \qq_{>0}$ is the sum of two copies of $\frac{q}2$ in $\qq_{\ge 0}$, thus $\qq_{\ge 0}$ contains no atoms. An element $q \in M$ is called \emph{atomic} if either $q \in \uu(M)$ or~$q$ can be written as a sum of finitely many atoms of $M$. Following~\cite{pC68}, we say that the monoid $M$ is \emph{atomic} if every element of $M$ is atomic. One can readily check that $M$ is atomic if and only if $M_{\text{red}}$ is atomic. Following \cite{AAZ90}, we say that $M$ is a \emph{strongly atomic} monoid if $M$ is simultaneously atomic and $2$-MCD. See~\cite{CG24} for a recent survey on atomicity in the class of integral domains and~\cite{GZ20} for a survey on factorization theory in the class of commutative monoids. For a generous background on factorization theory in atomic monoids and domains, see~\cite{GH06b}.
\smallskip

Now suppose that $M$ is an atomic monoid. For $q \in M \setminus \uu(M)$ and $a_1, \dots, a_\ell \in \mathcal{A}(M)$ such that $q = a_1 + \dots + a_\ell$, we call the formal sum of (possibly repeated) atoms $a_1 + \dots + a_\ell$ a \emph{factorization} of~$q$. Two factorizations of the same element are considered the same up to permutations of their atoms and replacements of some of their atoms by some of their corresponding associates. We further assume that each element of $\uu(M)$ has a unique factorization, namely, $0$. For each $q \in M$, we let $\mathsf{Z}_M(q)$ denote the set of all factorizations of $q$ in $M$, and we drop the subscript~$M$ from $\mathsf{Z}_M(q)$ when we see no risk of ambiguity (in particular, $\mathsf{Z}_M(u) := \{0\}$ for any $u \in \uu(M)$. We say that $M$ is a \emph{finite factorization monoid} (FFM) if every element of $M$ has only finitely many factorizations, while we say that $M$ is a \emph{unique factorization monoid} (UFM) if every element of $M$ has a unique factorization.

For any $a_1, \dots, a_\ell$, the factorization $z := a_1 + \dots + a_\ell$ is said to have \emph{length} $\ell$, and we often write the length of $z$ as $|z|$. For each $q \in M$, we set
\[
	\mathsf{L}_M(q) := \{|z| : z \in \mathsf{Z}(q)\}.
\]
As for sets of factorizations, we drop the subscript $M$ from $	\mathsf{L}_M(q)$ when there seems to be no risk of ambiguity. The monoid $M$ is called a \emph{bounded factorization monoid} (BFM) if $\mathsf{L}(q)$ is finite for every $q \in M$. It follows directly from the definitions that every FFM is a BFM. See~\cite{AG22} for a survey on the bounded and finite factorization property in the class of integral domains. Another weaker notion of the finite factorization is the length-finite factorization property, which we proceed to introduce. For each $q \in M$ and $\ell \in \nn$, we set
\[
	\mathsf{Z}_\ell(q) := \{z \in \mathsf{Z}(q) : |z| = \ell \}.
\]
Then we say that $M$ is a \emph{length-finite factorization monoid} (LFFM) provided that for each $q \in M$ the set $\mathsf{Z}_\ell(q)$ is finite for every $\ell \in \nn$. It follows directly from the corresponding definitions that every FFM is an LFFM. On the other hand, it is known that the notions of BFM and LFFM are not comparable (we will discuss examples illustrating these observations in Section~\ref{sec:factorization properties}). See~\cite{aG16} for a survey on sets of lengths.

\bigskip
\section{Atomicity}
\label{sec:atomic properties}

In this section, we study how the property of being atomic behaves under internal sum in the class of Puiseux monoids. Let us begin by proving that atomicity and strong atomicity are both preserved under the internal sum with a finitely generated Puiseux monoid.

\begin{theorem} \label{thm:atomic+fg}
	Let $M$ and $N$ be Puiseux monoids such that $N$ is finitely generated. Then the following statements hold.
	\begin{enumerate}
		\item If $M$ is atomic, then $M+N$ is atomic. 
		\smallskip
		
		\item If $M$ is strongly atomic, then $M+N$ is strongly atomic.
	\end{enumerate}
	
\end{theorem}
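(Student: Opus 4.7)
The plan is to prove both parts using a distinguished ``max-$n^*$'' decomposition of elements of $M+N$ together with a finiteness induction. Since $N$ is finitely generated, there exists $D \in \nn$ with $N \subseteq \frac{1}{D}\nn_0$, and consequently, for every $q \in M+N$, the set $\Omega_q := \{n \in N : q - n \in M\}$ is a nonempty finite subset of $N \cap [0,q]$. Define $n^*(q) := \max \Omega_q$ and $m^*(q) := q - n^*(q) \in M$. The maximality of $n^*(q)$ is equivalent to the key property that $m^*(q) - n \notin M$ for every $n \in N^\bullet$, since otherwise $n^*(q) + n$ would belong to $\Omega_q$ and strictly exceed $n^*(q)$.

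The first step is a short lemma asserting that in any factorization $m^*(q) = a_1 + \cdots + a_\ell$ with $a_i \in \ii(M)$, each $a_i$ is automatically an atom of $M+N$. The argument is a case analysis: if $a_i = x + y$ with $x, y \in (M+N)^\bullet$, then writing $x = m_x + n_x,\ y = m_y + n_y$ we have $a_i = (m_x + m_y) + (n_x + n_y)$. The subcase $n_x = n_y = 0$ is excluded because $a_i \in \ii(M)$, and in every other subcase one produces some $n \in N^\bullet$ (either $n_x + n_y$ or $a_i$ itself) with $m^*(q) - n \in M$, contradicting the key property.

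Part (1) then proceeds by strong induction on the finite integer $\mu(q) := |N \cap [0,q]|$. In the base case $\mu(q) = 1$ one has $q \in M$ (only $0$ lies in $\Omega_q$) and the smallest positive element $b_{\min}$ of $N$ exceeds $q$; the same case analysis as above then shows every $M$-atom in a factorization of $q$ is already an atom of $M+N$, since both non-atomicity obstructions would require an element of $N^\bullet$ of size less than $b_{\min}$. For the inductive step, the first lemma handles $m^*(q)$, and $n^*(q)$ is factored in $N$ as $b_1 + \cdots + b_k$. If some $b_j$ fails to be an atom of $M+N$, the case analysis yields either $b_j = c_1 + c_2$ with $c_i \in M^\bullet$ (when $b_j$ is $M$-decomposable) or $b_j = m' + n'$ with $m' \in M^\bullet,\ n' \in N^\bullet$ and both $m', n' < b_j$. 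In either subcase, every summand is an element of $M+N$ strictly less than $b_j \leq q$, and crucially $\mu$ drops strictly because $b_j$ itself lies in $N \cap [0,q]$ but outside $N \cap [0, c_k]$ (resp.\ $[0, m'], [0, n']$). The inductive hypothesis then supplies atomic factorizations of these smaller pieces, which combine with the $a_i$ to factor $q$ atomically.

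The hardest step is verifying this strict decrease in $\mu$ in the ``mixed'' subcase $b_j = m' + n'$; this is precisely where the hypothesis $N \subseteq \frac{1}{D}\nn_0$ is indispensable, since it makes $\mu$ integer-valued and ensures each $N$-atom of $[0,q]$ contributes a unit drop when removed from the interval. The higher-rank counterexamples alluded to in the introduction correspond precisely to the failure of this discreteness. For part (2), atomicity is given by part (1), and for the 2-MCD property I would take the max-$n^*$ decompositions $q_i = m^*(q_i) + n^*(q_i)$ and extract a maximal common divisor $d_M$ of $m^*(q_1), m^*(q_2)$ in $M$ using the strong atomicity of $M$. Since finitely generated Puiseux monoids need not themselves be 2-MCD (the numerical monoid $\langle 2, 3 \rangle$ being a witness), the $N$-component of the candidate MCD must be extracted directly from the finite set $N \cap [0, \min(q_1, q_2)]$; its maximality is then verified by the same case-analysis technique used for the key property in part (1).
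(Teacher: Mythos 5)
Your part~(1) is correct, and it takes a genuinely different route from the paper. The paper first reduces to the cyclic case $N = \nn_0 r$ and then, for each $a \in \mathcal{A}(M)$, subtracts the maximal multiple of $r$ dividing $a$ in $S$; your argument instead treats a general finitely generated $N$ in one pass, using the maximal-$N$-part decomposition $q = m^*(q) + n^*(q)$ together with strong induction on the integer $\mu(q) = |N \cap [0,q]|$. Your key lemma (atoms of $M$ occurring in a factorization of $m^*(q)$ stay irreducible in $M+N$, because a nontrivial $N$-part in a splitting would contradict the maximality of $n^*(q)$) is sound, and the strict drop of $\mu$ when an $N$-atom $b_j$ splits is verified correctly. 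What your approach buys is that it avoids the induction on the number of generators of $N$; what it costs is that you must separately handle the $N$-atoms $b_j$, which the paper's cyclic reduction makes automatic ($r$ is immediately seen to be an atom of $S$).

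Part~(2), however, has a genuine gap, and it sits exactly at the hard point of the theorem. Producing a candidate $d_M + n_0$ with $d_M$ a maximal common divisor of $m^*(q_1), m^*(q_2)$ in $M$ and $n_0$ chosen from the finite set $N \cap [0,\min(q_1,q_2)]$ does not address the possibility of a \emph{mixed} nonzero common divisor $e = m_e + n_e$ of $q_1 - d_M - n_0$ and $q_2 - d_M - n_0$ with $m_e \neq 0 \neq n_e$: such an $e$ need not have $m_e$ dividing $m^*(q_i) - d_M$ in $M$ (the witnessing decompositions of $q_i - d_M - n_0 - e$ can redistribute mass between the $M$- and $N$-parts), so the maximality of $d_M$ in $M$ does not rule it out, and the ``key property'' case analysis from part~(1) does not apply because $q_i - d_M - n_0$ is not of the form $m^*(\cdot)$. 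Once such an $e$ appears you must restart with a new $M$-component, and the entire difficulty is proving that this extraction process terminates even though $S$ need not satisfy the ACCP. The paper supplies precisely this descent: after reducing to $N = \nn_0 r$, it inducts on the $r$-multiplicity $m_y$ and shows that extracting a common divisor $d_2$ incompatible with the $M$-level MCD $d_1$ forces $m_{y - d_1 - d_2} < m_y$. Nothing in your sketch plays this role. Separately, your justification for bypassing an MCD computation in $N$ is based on a false claim: $\langle 2,3 \rangle$ \emph{is} $2$-MCD, since every finitely generated reduced monoid is an FFM, hence satisfies the ACCP, hence admits maximal common divisors of all pairs; this does not by itself break your argument, but it signals that the part of the proof you have left to ``the same case-analysis technique'' is not routine.
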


\begin{proof}
	Since $N$ is finitely generated, we can assume that $N$ is a cyclic Puiseux monoid and then extend to any finitely generated monoid inductively. Let $r$ be a positive rational, and set $S := M + \nn_0 r$.  If $r \in M$, then $S = M$ and so $S$ is atomic (resp., strongly atomic) provided that $M$ is atomic (resp., strongly atomic). 
	Therefore, we assume that $r \notin M$. 
	
	(1) Suppose that $M$ is atomic. Since $\mathcal{A}(M)$ generates~$M$, it follows that $\mathcal{A}(M) \cup \{r\}$ is a generating set of~$S$. Thus, it remains to show that every element in $A := \mathcal{A}(M) \cup \{r\}$ is atomic in~$S$. 
	Because $r$ is the minimum nonzero element of $\nn_0 r$, the fact that $r \notin M$ guarantees that $r \in \mathcal{A}(S)$, and so $r$ is atomic. To argue that every atom of $M$ is atomic in $S$, fix $a \in \mathcal{A}(M)$. If~$a$ is an atom in $S$, then there is nothing to do. Therefore we assume that $a$ is not an atom in~$S$. Thus, $r$ properly divides $a$ in $S$. Let $m$ be the largest positive integer such that $mr \mid_S a$. Then $a - mr \in M$, and so the fact that $M$ is atomic ensures the existence of $a_1, \dots, a_k \in \mathcal{A}(M)$ such that $a = (a_1 + \dots + a_k) + mr$. It follows now from the maximality of $m$ that $r \nmid_S a_i$ for any $i \in \ldb 1,k \rdb$; hence, $a_1, \dots, a_k$ remain atoms in~$S$. This, along with the fact that $r$ is atomic in $S$, guarantees that $a$ is atomic in~$S$. Hence the Puiseux monoid $S$ is atomic.
	\smallskip
	
	(2) Now suppose that $M$ is strongly atomic. As $M$ is atomic, it follows from part~(1) that $S$ is also atomic. Thus, it is enough to prove that $S$ is a $2$-MCD monoid, that is, any two elements of $S$ have a maximal common divisor. As before, the fact that $r \notin M$ ensures that $r \in \mathcal{A}(S)$. For each $c \in S$, we let $m_c$ denote the largest nonnegative integer such that $m_c r \mid_S c$, which must exist as $r$ is positive. Using this notation, each $c \in S$ can be written as $c = b + m_cr$ for some $b \in S$ such that $r \nmid_S b$.
	\smallskip
	
	\noindent \textsc{Claim.} For any $x,y \in S$ with $m_x = 0$ the set $\{x,y\}$ has a maximal common divisor in~$S$. 
	\smallskip
	
	\noindent \textsc{Proof of Claim.} We proceed by induction on $m_y$. If $m_y = 0$, then all the divisors of $x$ and $y$ belong to $M$, so the fact that $M$ is a $2$-MCD guarantees that $\{x,y\}$ has a maximal common divisor in $M$, and thus in $S$ as well. 
	Now suppose that $m_y > 0$, and suppose that the statement of the claim holds for any subset $\{x',y'\}$ of $S$ with $m_{x'} = 0$ and $m_{y'} < m_y$. Write $y = y' + m_yr$ for some $y' \in S$ such that $r \nmid_S y'$. Then each divisor of $y'$ in $S$ is also a divisor of $y'$ in $M$. Let $d_1$ be a maximal common divisor of $x$ and $y'$ in $M$, which must exist because $M$ is a $2$-MCD monoid. If $x - d_1$ and $y - d_1$ have no non-invertible (nonzero in the context of Puiseux monoids) common divisor in $S$, then $d_1$ is a maximal common divisor of $x$ and $y$ in $S$, and we are done. Otherwise, let $d_2$ be a nonzero common divisor of $x-d_1$ and $y-d_1$ in $S$. We know that $d_2 \nmid_M y' - d_1$; thus, because each divisor of $y'$ in $S$ must belong to~$M$, we must have $d_2 \nmid_S y' - d_1$. This means that $m_{y-d_1-d_2} < m_y$, since
	\[
		y - d_1 - d_2 - m_y r = y' - d_1 - d_2 \notin S.
	\]
	It follows now from our induction hypothesis that the set $\{x - d_1 - d_2, y - d_1 - d_2\}$ has a maximal common divisor in $S$, implying that $\{x,y\}$ has a maximal common divisor in~$S$. Hence the claim is established.
	
	Now consider any $x,y$ in $S$ with $m_x > 0$. Without loss of generality, we can assume that $m_x \le m_y$. Because  $m_{x - m_x r} = 0$, it follows from our established claim that the set $\{x - m_x r, y - m_x r\}$ has a maximal common divisor in $S$. Thus, $\{x, y\}$ also has a maximal common divisor in $S$. Hence, we conclude that the Puiseux monoid $S$ is strongly atomic.
\end{proof}

%
%
%

However, if we consider the abelian group $\zz^2$ as our universe, it is not true that the internal sum of a strongly atomic submonoid of $\zz^2$ and a finitely generated submonoid of $\zz^2$ is an atomic monoid. The following example sheds some light upon this observation. 

\begin{example} \label{ex:sum of rank-2 atomic lattice monoids that is not atomic}
	We describe two atomic submonoids $M_1$ and $M_2$ of the free abelian group $\zz^2$ whose internal sum in $\zz^2$ is not atomic, even though $M_1$ is finitely generated. Let $M_1$ be the monoid consisting of all the lattice points in the first quadrant of $\rr^2$, that is, $M_1 := \nn_0 \times \nn_0$. Observe that $M_1$ is the free commutative monoid of rank $2$ and, therefore, $M_1$ is atomic with $\mathcal{A}(M_1) = \{e_1, e_2\}$, where $e_1 := (1,0)$ and $e_2 := (0,1)$. In particular, $M_1$ is generated by the finite set $\{e_1, e_2\}$. Now suppose that $M_2$ is the submonoid of $\zz^2$ whose set of nonzero elements are all the lattice points strictly above the $x$-axis, that is $M_2 := (0, 0) \cup \mathbb Z \times \mathbb N$. One can readily check that $\mathcal{A}(M_2) = \{(n,1) : n \in \zz\}$, which immediately implies that $M_2$ is also atomic. In fact, if we take any two $u_1:=(x_1,y_1),u_2:=(x_2,y_2)\in M_2$ and assume without loss of generality that  $y_1<y_2$, then $u_1$ is a maximal common divisor of $u_1$ and $u_2$, because it is clear that $u_1\mid_{ M_2} u_2$ and $u_1-u_1=0$ do not have reducible divisors. This means $M_2$ is strongly atomic. Now let $M$ be the internal sum in $\zz^2$ of $M_1$ and $M_2$; that is, $M := M_1 + M_2$. It is clear that $M_1 \cup M_2 \subseteq M$. On the other hand, for each nonzero $v := (x,y)\in M_2$, the fact that $y \in \zz_{\ge 1}$ ensures that the inclusion $M_1 + v \subseteq M_2$ holds. Therefore
    \[
        M = M_1 \cup \big( \bigcup_{v \in M_2^\bullet} (M_1 + v) \big) \subseteq M_1 \cup M_2.
    \]
    Hence $M = M_1 \cup M_2$, from which we see that $M$ is the nonnegative cone of $\zz^2$ under the lexicographical order with priority on the second coordinate. As a result, the only atom of $M$ is the minimum of $M^\bullet$, which means that $\mathcal{A}(M) = \{e_1\}$. As a consequence, $M$ is not atomic: indeed, the set of atomic elements of $M$ is $\{(m,0) : m \in \nn_0\}$, which is strict subset of $M$.
\end{example}

We conclude this section, showing that the internal sum inside $\qq$ of two atomic Puiseux monoids may not be atomic. Indeed, we will provide a more drastic construction: two atomic Puiseux monoids whose internal sum is antimatter.

\begin{example}
	Let us construct two atomic Puiseux monoids $M_1$ and $M_2$ such that $M_1+M_2$ is antimatter. Let $p_n$ denote the $n$-th odd prime, and set $a_n := \frac{1}{2^np_n}$ for every $n \in \nn$. Now set $S_1 := \{a_n \colon n \in \mathbb N\}$, and let $M_1$ be the Puiseux monoid generated by $S_1$. 
	It is routine to argue that $M_1$ is atomic with $\mathcal{A}(M_1) = S_1$ (the Puiseux monoid $M_1$ is the main ingredient in Grams's construction of the first atomic domain not satisfying the ACCP, and due to this historical fact $M_1$ is often called Grams' monoid).
 
	Let $f \colon \mathbb N \to \mathbb N$ be an injective function such that $p_{f(n)} > 2^np_n$ for all $n \in \mathbb N$. Now for each $n \in \mathbb N$, we construct an infinite sequence $(b_i)_{i \ge 1}$ such that
	\[
		b_1 := a_n - a_{f(n)} > \frac{1}{2}a_n \quad \text{and} \quad b_{i+1} := b_i - \frac{1}{2^{c_i}},
	\]
	where $c_i \in \mathbb N$ is chosen such that $b_{i+1} > \frac12 a_n$. We will refer to the elements in $(b_i)_{i \ge 1}$ as \emph{$n$-atoms}. Let $S_2$ be the union of the sets of $n$-atoms over all $n$, and let $M_2$ be the Puiseux monoid generated by $S_2$.
	\smallskip

    \noindent \textsc{Claim.} $M_2$ is atomic with $\mathcal{A}(M_2) = S_2$.
    \smallskip
    
	\noindent \textsc{Proof of Claim.} Suppose by way of contradiction that there exists an element in $S_2$ that can be expressed as the finite sum of other elements in $S_2$. Suppose that this element is an $n$-atom, $b_k$, for some $n \in \mathbb N$. Then $$b_k = \sum_{s \in S'} s$$ for some multiset $S' \subset S_2\setminus \{b_k\}$ with $|S'| < \infty$.
		
	Because $b_k$ has a factor of $p_n$ in its denominator, $S'$ must also contain elements with a factor of $p_n$ in their denominators. The only such elements in $S_2$ are either $n$-atoms or $m$-atoms for the at most one value of $m$ such that $f(m) = n$. We split the rest of the argument into two cases.
    \smallskip

	\noindent \textit{Case i:} If $S'$ contains the $n$-atom $b_i$, note that we can have at most one $n$-atom in $S'$, since $b_i + b_j \ge 2b_j > b_1 \ge b_k$ for all $j \ge i$. We can write $$b_k-b_i = \sum_{s \in S'\setminus \{b_i\}} s,$$ where $b_k-b_i$ is a sum of powers of $\frac{1}{2}$. Furthermore, $b_k - b_i < b_i \le b_1 < \frac{1}{2}$. 
			
	Let $\ell \in \mathbb N$ be such that $S'$ contains $\ell$-atoms but does not contain $f(\ell)$-atoms; this is possible because $S'$ is finite. Since the denominator of $b_k - b_i$ is not divisible by any odd primes, there must exist at least $p_{f(\ell)}$ $\ell$-atoms in $S'$. Since all $\ell$-atoms are greater than $\frac12 a_\ell$, the total sum of these $\ell$-atoms in $S'$ is greater than $p_{f(\ell)} \cdot \frac12 a_\ell = \frac12 \left(\frac{p_{f(\ell)}}{2^\ell p_\ell}\right) > \frac12$ (recall that $p_{f(\ell)} > 2^\ell p_\ell$). Thus $$\sum_{s \in S'\setminus \{b_i\}} s > \frac12 > b_k-b_i,$$ a contradiction.
    \smallskip
    
	\noindent \textit{Case ii:} Otherwise, we must have at least $p_n -1$ $m$-atoms in $S'$. Note that $$\sum_{s \in S'} s = b_k \le b_1 = a_n - a_{f(n)} \le a_n < \frac12.$$ However, since all $m$-atoms are greater than $\frac12 a_m$, the sum of these $p_n -1$ $m$-atoms in $S'$ is already greater than $\frac12 \left(\frac{p_n-1}{2^mp_m}\right) \ge \frac12$. This dictates that $$\sum_{s \in S'} s > \frac12,$$ a contradiction. 
	\smallskip
 
	Since both cases yield a contradiction, all elements of $S_2$ must be atoms. Hence $M_2$ is atomic, and our claim is established.
	\smallskip
 
	We can finally argue that the Puiseux monoid $M := M_1 + M_2$ is antimatter despite being the internal sum of two atomic Puiseux monoids. To do so, first observe that any $a_n \in S_1$, which is an atom of $M_1$, is divisible in $M$ by any $n$-atom and therefore cannot be an atom of $M$. Also, any $n$-atom $b_k \in S_2$, which is an atom of $M_2$, is divisible in $M$ by $b_{k+1}$, which means it cannot be an atom of $M$. Thus, none of the generators of $M$ is irreducible, which clearly implies that the Puiseux monoid $M$ is antimatter.
\end{example}

\bigskip
\section{The Bounded and Finite Factorization Properties} 
\label{sec:factorization properties}

As the following theorem indicates, inside the class of Puiseux monoids the properties of being an FFM, BFM, or LFFM are preserved under the internal sum with a finitely generated monoid (in Example~\ref{ex:sum of FFMs is not FFM in Z^2} we show that the same properties are not preserved under the internal sum with a finitely generated monoid in the class consisting of all submonoids of $\zz^2$). 

\begin{theorem} \label{thm:ff/lff/bf + fg}
	Let $M$ and $N$ be Puiseux monoids such that $N$ is finitely generated, then the following statements hold.
	\begin{enumerate}
		\item If $M$ is an FFM, then $M+N$ is an FFM.
		\smallskip
		
		\item If $M$ is a BFM, then $M+N$ is a BFM.
		\smallskip
		
		\item If $M$ is an LFFM, then $M+N$ is an LFFM.
	\end{enumerate} 
\end{theorem}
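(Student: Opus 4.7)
The plan is to mirror the structure of the proof of Theorem~\ref{thm:atomic+fg}: first reduce to the case where $N$ is cyclic, say $N = \nn_0 r$ for some $r \in \qq_{>0}$, via an induction on the number of generators of $N$. Setting $S := M + \nn_0 r$, the case $r \in M$ gives $S = M$ and the conclusion is immediate, so throughout one may assume $r \notin M$.

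The first real step will be to identify the atoms of $S$. I would show that
\[
    \mathcal{A}(S) = \{r\} \cup \{a \in \mathcal{A}(M) : a - r \notin S\}.
\]
For the containment from left to right, an atom $a$ of $S$ with $a \neq r$ can be written as $a = m + kr$ with $m \in M$ and $k \in \nn_0$, and the irreducibility of $a$ in $S$ forces $k = 0$, so $a = m$; moreover $a$ cannot split non-trivially in $M$ (else it would split in $S$), giving $a \in \mathcal{A}(M)$. That $r \in \mathcal{A}(S)$ follows because any proper decomposition $r = b + c$ in $S$ forces $b, c < r$, and this in turn forces $b, c \in M$, contradicting $r \notin M$.

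Once this description is in place, every factorization $z \in \mathsf{Z}_S(q)$ of a given $q \in S$ decomposes uniquely as $k$ copies of $r$ together with a multiset of atoms of $M$ whose total equals $q - kr$. Writing $K := \{k \in \nn_0 : q - kr \in M\}$, which is finite since $k \le q/r$, this bookkeeping gives an injection
\[
    \varphi \colon \mathsf{Z}_S(q) \longrightarrow \bigsqcup_{k \in K} \mathsf{Z}_M(q - kr)
\]
sending a factorization of length $\ell$ in $S$ to a factorization of length $\ell - k$ in $\mathsf{Z}_M(q - kr)$. This single injection yields all three statements simultaneously:
\begin{align*}
    |\mathsf{Z}_S(q)| &\le \sum_{k \in K} |\mathsf{Z}_M(q - kr)|, \\
    \mathsf{L}_S(q) &\subseteq \bigcup_{k \in K} \bigl(k + \mathsf{L}_M(q - kr)\bigr), \\
    |\mathsf{Z}_{S, \ell}(q)| &\le \sum_{k \in K,\, k \le \ell} |\mathsf{Z}_{M, \ell - k}(q - kr)|.
\end{align*}
Each right-hand side is a finite sum (or union) of finite quantities whenever $M$ is an FFM, a BFM, or an LFFM, respectively, so the property on $M$ transfers directly to $S$.

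I do not anticipate a serious obstacle; the crux is the inclusion $\mathcal{A}(S) \setminus \{r\} \subseteq \mathcal{A}(M)$, which is what makes the ``remove the $r$'s'' map a well-defined injection into a finite disjoint union of factorization sets of $M$. The one point demanding care is verifying that the $M$-part of any $S$-factorization is a genuine factorization in $M$ (rather than merely a sum of arbitrary monoid elements), and this is exactly what the atom description secures.
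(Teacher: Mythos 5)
Your proposal is correct and follows essentially the same route as the paper: reduce to $N = \nn_0 r$ with $r \notin M$, observe that $r \in \mathcal{A}(S)$ and every other atom of $S := M + \nn_0 r$ is an atom of $M$, and strip the copies of $r$ from each factorization to land injectively in the finitely many sets $\mathsf{Z}_M(q - kr)$. The only point to state explicitly is that $S$ is atomic (so the FFM/BFM/LFFM conclusions are meaningful), which follows from Theorem~\ref{thm:atomic+fg}(1); beyond that, your single injection handles part (3) directly where the paper argues by contradiction, but the underlying decomposition is identical.
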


\begin{proof}
	Since $N$ is finitely generated, it suffices to consider the case when $N$ is a cyclic monoid. To do so, let $r$ be a positive rational and consider the Puiseux monoid $S := M + \nn_0 r$. In light of Theorem~\ref{thm:atomic+fg}, the monoid $S$ is atomic if $M$ is either an FFM, BFM, or LFFM. Observe that if $r \in M$, then $S = M$ and so $S$ is an FFM (resp., a BFM or an LFFM) if and only if $M$ is an FFM (resp., a BFM or an LFFM). Therefore we assume that $r \notin M$.
	\smallskip
	
	(1) Suppose that $M$ is an FFM, and let us argue that $S$ is an FFM. As $S$ is atomic, it suffices to show that every element of $S$ has only finitely many factorizations. Fix a nonzero $s \in S$. Since $r \notin M$, the fact that $r$ is the minimum of $\nn r$ implies that $r \in \mathcal{A}(S)$. Now notice that there are only finitely many $c \in \nn_0$ such that $s - cr \in M$: let them be $c_1, \dots, c_n$. Since every factorization of $s$ in $S$ has the form $(a_1 + \dots + a_m) + c_k r$, it follows that
	\begin{equation} \label{eq:factorization set}
		\mathsf{Z}_S(s) = \bigcup_{k=1}^n \big( \mathsf{Z}_M(s - c_k r) + c_k r \big).
	\end{equation}
	Now the fact that $M$ is an FFM ensures that $|\mathsf{Z}_M(s - c_k r)| \le \infty$ for every $k \in \ldb 1,n \rdb$. As a consequence, $\mathsf{Z}_S$(s) is the finite union of finite sets, and therefore is itself finite. Hence we conclude that the Puiseux monoid $S$ is an FFM. 
	\smallskip
	
	(2) Now suppose that $M$ is a BFM, and let us show that $S$ is also a BFM. To do so, fix a nonzero $s \in S$. As in the proof of part~(1), we can see that $r \in \mathcal{A}(S)$ and we can check that there are only finitely many $c \in \nn_0$ such that $s - cr \in M$, which we denote by $c_1, \dots, c_n$. Observe that the equality~\eqref{eq:factorization set} still holds, and it implies that $\mathsf{L}_S(s) = \bigcup_{k=1}^n \big( \mathsf{L}_M(s - c_k r) + c_k \big)$. Now the fact that $M$ is a BFM ensures that $|\mathsf{L}_M(s - c_k r)| \le \infty$ for every $k \in \ldb 1,n \rdb$, and this implies that the set $\mathsf{L}_S(s)$ is finite. Hence the Puiseux monoid $S$ is a BFM.
	\smallskip
	
	(3) Finally, suppose that $M$ is an LFFM. Assume, for the sake of contradiction, that there exist $\ell \in \nn$ and an element $q \in S$ with infinitely many factorizations of length~$\ell$. As in the previous two parts, $r \in \mathcal{A}(S)$. Since $M$ is an LFFM, the element $r$ must appear in infinitely many of these length-$\ell$ factorizations. However, $r>0,$ and so $r$ can only divide $n$ a finite number of times $f$. As a result, there must be an $i \in \nn$ with $i \leq f$ and $0<i<\ell$ such that infinitely many length-$\ell$ factorizations have exactly $i$ occurrences of the factor $r$ and the other factors are all in $M.$ This implies that $q - ir\in M$ has infinitely many factorizations of length $\ell-i$, which is a contradiction.
\end{proof}

In order to argue that each statement in Theorem~\ref{thm:ff/lff/bf + fg} are sharp, we proceed to construct two Puiseux monoids that are FFMs whose internal sum is not even an LFFM, and then we construct two Puiseux monoids that are BFMs whose internal sum is not even atomic.

\begin{example} \label{ex:sum of two FFMs not even LFF}
	For each $n \in \nn$, we let $p_n$ denote the $n$-th prime number in the set $\pp_{\ge 5}$. Now we consider the sequences $(a_n)_{n \ge 1}$ and $(b_n)_{n \ge 1}$ whose terms are the positive rationals defined as follows:
	\[
		a_n :=  \frac{p_n -1}{p_n} \text{ and } \quad b_n := \frac{p_n + 1}{p_n}
	\]
	for every $n \in \nn$. Now consider the Puiseux monoids $M_A$ and $M_B$ respectively generated by the sets $A := \{a_n : n \in \nn\}$ and $B := \{b_n : n \in \nn\}$. It is routine to verify that $\mathcal{A}(M_A) = A$ and $\mathcal{A}(M_B) = B$, and so $M_A$ and $M_B$ are atomic monoids.
	
	Let us show that both $M_A$ and $M_B$ are FFMs. Because the sequence $(a_n)_{n \ge 1}$ is an increasing sequence, it follows from \cite[Theorem~5.6]{fG19} that $M_A$ is an FFM. As $M_B$ is an atomic reduced monoid, it follows from \cite[Proposition~3.6]{AG22} that $M_B$ is an FFM if and only if every nonzero element of $M_B$ is divisible by only finitely many atoms. To argue this equivalent condition, let $q \in M_B$ be a nonzero element. Take $N \in \nn$ such that $p_n \nmid \mathsf{d}(q)$ and $p_n > q$ for any $n \ge N$.
	\smallskip
	
	\noindent \textsc{Claim.} $b_n \nmid_M q$ for any $n \ge N$. 
	\smallskip
	
	\noindent \textsc{Proof of Claim.} Suppose, by way of contradiction, that $b_k \mid_M q$ for some $k \ge N$. In this case, the fact that $M_B$ is atomic with $\mathcal{A}(M_B) = \{b_n : n \in \nn\}$ allows us to write
	\begin{align} \label{eq:aux q}
		q = \sum_{n=1}^\ell c_n \frac{p_n + 1}{p_n}
	\end{align}
	for some coefficients $c_1, \dots, c_\ell \in \nn_0$ such that $k \le \ell$ and $c_k > 0$. Since $k \ge N$, it follows that $p_k \nmid \mathsf{d}(q)$. Therefore the $p_k$-adic valuation of $q$ is nonnegative and so, after applying the $p_k$-adic valuation map to both sides of~\eqref{eq:aux q}, we obtain that $p_k \mid c_k$. Thus, $q \ge \frac{c_k}{p_k}(p_k + 1) \ge p_k + 1$, which contradicts the fact that $k \ge N$. Hence the claim is established.
	\smallskip
	
	From the established claim, we obtain that $q$ is only divisible by some of the atoms $b_1, \dots, b_{N-1}$, and so the Puiseux monoid $M_B$ is also an FFM.
	\smallskip
	
	Finally, consider the Puiseux monoid $M := M_A + M_B$. It is clear that $\mathcal{A}(M) \subseteq \mathcal{A}(M_A) \cup \mathcal{A}(M_B) = \{a_n, b_n : n \in \nn\}$. For each $n \in \nn$, the fact that $p_n \in \pp_{\ge 5}$ implies that $\frac34 \lneq a_n \lneq 1$ and $1\lneq b_n \lneq \frac54$. This, along with the fact that all the atoms of $M$ belong to $\{a_n, b_n : n \in \nn\}$, guarantees that
	\[
		\mathcal{A}(M) =  \{a_n, b_n : n \in \nn \} = \Big\{ \frac{p_n \pm 1}{p_n} : n \in \nn \Big\}.
	\]
	Also, as $0$ is not a limit point of $\mathcal{A}(M)$, it follows from \cite[Proposition~4.5]{fG19} that $M$ is a BFM. In particular, $M$ is atomic. On the other hand, the equality
	\[
		2 =  \frac{p_n -1}{p_n} + \frac{p_n + 1}{p_n} = a_n + b_n
	\]
	holds for every $n \in \nn$, which implies that $2$ has infinitely many length-$2$ factorizations in $M$. As a consequence, the monoid $M$ is not an FFM. Hence $M_A$ and $M_B$ are two FFMs whose internal sum is not an FFM.

\end{example}

Now we illustrate that the internal sum of two Puiseux monoids that are BFMs may not be a BFM. This example is motivated by \cite[Example~3.4]{GG24}, where the authors show that the internal sum of two Puiseux monoids satisfying the ascending chain condition on principal ideals may not satisfy the same condition.

\begin{example} \label{ex:sum of BFMs that is not a BFM}
	We will produce two Puiseux monoids $M_1$ and $M_2$ that are BFMs such that their sum $M_1 + M_2$ is not even atomic. Set $M_1 := \{0\} \cup \qq_{\ge 1}$. Because $0$ is not a limit point of $M_1^\bullet$, it follows from \cite[Proposition~4.5]{fG19} that $M_1$ is a BFM. Now consider the Puiseux monoid $M_2$ defined as follows:
	\[
		M_2 := \bigg\langle \frac{p_n + 1}{p_n^2} : n \in \nn \bigg\rangle,
	\]
	where $p_n$ is the $n$-th prime. It is routine to check that $\mathcal{A}(M_2) = \big\{ \frac{p_n  +1}{p_n^2} : n \in \nn \big\}$, which immediately implies that $M_2$ is an atomic monoid. Moreover, we can actually prove that $M_2$ is a FFM by mimicking the argument used to show that $M_B$ is a FFM in Example~\ref{ex:sum of two FFMs not even LFF}. Thus, $M_2$ is a BFM. 
	
	Now let $M$ be the internal sum of $M_1$ and $M_2$ inside $\qq$; that is, $M = M_1 + M_2$. We claim that $M$ is not atomic. First, note that because $\min M_1^\bullet = 1 > \sup \mathcal{A}(M_2)$, no nonzero element of $M$ can divide any of the atoms of $M_2$ in $M$, which implies that $\mathcal{A}(M_2) \subseteq \mathcal{A}(M)$. On the other hand, we observe that if we take $q \in M_1$ with $q > 1$, the fact that $\lim_{n \to \infty} \frac{p_n + 1}{p_n^2} = 0$ allows us to take $n \in \nn$ large enough so that $\frac{p_n + 1}{p_n^2} < q-1$, in which case the equality
	\[
		q = \bigg( q - \frac{p_n + 1}{p_n^2} \bigg) + \frac{p_n + 1}{p_n^2} \in M^\bullet + M^\bullet
	\]
	ensures that $q \notin \mathcal{A}(M)$. Hence $\mathcal{A}(M) \subseteq \{1\} \cup \big\{ \frac{p_n + 1}{p_n^2} : n \in \nn \big\}$. This in turn implies that the element $\frac98 \in M$ is not atomic because if $r \in \big\langle \{1\} \cup \big\{ \frac{p_n + 1}{p_n^2} : n \in \nn \big\} \big\rangle$, then the denominator of $r$ cannot be divisible by the cube of any prime. As a result, the Puiseux monoid $M_1 + M_2$ is not atomic (and so not a BFM), even though $M_1$ and $M_2$ are BFMs.
\end{example}

For torsion-free abelian groups of rank larger than $1$, we can find submonoids that are FFMs whose internal sum is not even atomic. To illustrate this, we revisit Example~\ref{ex:sum of rank-2 atomic lattice monoids that is not atomic}, where we exhibited two atomic rank-$2$ submonoids of the abelian group $\zz^2$ whose sum is not atomic.

\begin{example} \label{ex:sum of FFMs is not FFM in Z^2}
	Let $M_1$ and $M_2$ be the rank-$2$ submonoids of the free abelian group $\zz^2$. Since $M_1$ is the free commutative monoid of rank-$2$, it is a UFM and so it must be an FFM. On the other hand, observe that the reduced monoid $M_2/\uu(M_2)$ is isomorphic to the free commutative monoid $\nn_0$; thus, it is a UFM and, therefore, an FFM. Thus, $M_1$ and $M_2$ are both FFMs, and we have already see in Example~\ref{ex:sum of rank-2 atomic lattice monoids that is not atomic} that $M_1 + M_2$ is the nonnegative cone of $\zz^2$ under the lexicographical order (on the second coordinate), and so it is not even atomic. 
\end{example}

\bigskip
\section{Final Remarks on the Main Results}
\label{sec:final remarks}

In our main results, Theorems~\ref{thm:atomic+fg} and~\ref{thm:ff/lff/bf + fg}, we have chosen finitely generated monoids because they are some of the most standard and basic monoids studied in monoid and semigroup theory. One may wish to extend both theorems by replacing finitely generated monoids by a larger and natural class of monoids. It follows from \cite[Proposition~2.7.8]{GH06b} that every finitely generated monoid is an FFM, and so a BFM. On the other hand, it follows from~\cite[Proposition~4.5]{fG19} that every Puiseux monoid $M$ is a BFM provided that $0$ is not a limit point of $M^\bullet$. We call a Puiseux monoid $M$ a \emph{bounded below monoid} (BBM) if $0$ is not a limit point of $M^\bullet$. Therefore every Puiseux monoid that is a BBM is also a BFM, and so we obtain the following diagram of implications in the class of Puiseux monoids:
\begin{equation*}
    \begin{tikzcd}
        \textbf{ FGM } \ \arrow[r, Rightarrow]  \arrow[d, Rightarrow] & \ \textbf{ BBM } \arrow[d, Rightarrow] \\
        \textbf{ FFM } \ \arrow[r, Rightarrow] & \ \textbf{ BFM }
    \end{tikzcd}
\end{equation*}
where FGM stands for ``finitely generated monoid". Observe that the Puiseux monoids $M_A$ and $M_B$ in Example~\ref{ex:sum of two FFMs not even LFF} are both FFMs and BBMs but they are not finitely generated. In addition, the Puiseux monoid $M_2$ in Example~\ref{ex:sum of BFMs that is not a BFM} is an FFM (and so a BFM) that is not a BBM, while the Puiseux monoid $\{0\} \cup \qq_{\ge 1}$ is a BBM that is not an FFM (and so not a LFFM) because $|\mathsf{Z}(3)| = \infty$ (indeed, since the set of atoms of $\{0\} \cup \qq_{\ge 1}$ is $\qq \cap [1,2)$, it follows that for each $n \ge 3$ the equality $3 = \big( \frac32 - \frac1n \big) + \big( \frac32 + \frac1n \big)$ yields a length-$2$ factorization of $3$).

We conclude with the following remark on some potential natural ways to strengthen the main theorems we have established in this paper.

\begin{remark}
    We cannot strengthen Theorems~\ref{thm:atomic+fg} and~\ref{thm:ff/lff/bf + fg} by replacing the condition that $N$ is finitely generated by the condition that $N$ is a BBM or by the condition that $N$ is an FFM. 
    \begin{itemize}
        \item Checking this for Theorem~\ref{thm:atomic+fg} amounts to observing that the Puiseux monoid $M_2$ in Example~\ref{ex:sum of BFMs that is not a BFM} is an FFM that is not a BBM (in particular, $M_2$ is strongly atomic) while the Puiseux monoid $\{0\} \cup \qq_{\ge 1}$ is a BBM that is not an FFM (in particular, $\{0\} \cup \qq_{\ge 1}$ is strongly atomic), but their internal sum is not even atomic, as argued in Example~\ref{ex:sum of BFMs that is not a BFM}.
        \smallskip

        \item The previous observation also guarantees that we cannot replace the condition that $N$ is finitely generated by the condition that $N$ is either a BBM or an FFM in parts~(2) and (3) of Theorem~\ref{thm:ff/lff/bf + fg}. The same observation ensures that we cannot replace the condition that $N$ is finitely generated by the condition that $N$ is a BBM in part~(1) of the same theorem. Finally, in light of Example~\ref{ex:sum of two FFMs not even LFF}, where we exhibited two Puiseux monoids that are FFMs whose internal sum is not an FFM, we can conclude that we cannot replace the condition that $N$ is finitely generated by the condition that $N$ is an FFM to strengthen part~(1) of Theorem~\ref{thm:ff/lff/bf + fg}.
    \end{itemize} 
\end{remark}

\bigskip
\section*{Acknowledgments}
The authors would like to kindly thank our PRIMES-USA mentor, Dr. Felix Gotti, for his instruction, guidance, and continued support throughout our research. We would also like to thank the staff at MIT-PRIMES for all of the time and effort they have put in to provide this nonpareil opportunity to conduct math research.

\bigskip

\end{document}